\newcommand{\bP}{\mathbb{P}}
\newcommand{\bN}{\mathbb{N}}
\newcommand{\bC}{\mathbb{C}}
\newcommand{\bR}{\mathbb{R}}
\newcommand{\rd}{\mathrm{d}}
\newcommand{\supp}{\operatorname{supp}}
\theoremstyle{plain}
\newtheorem{theorem}[equation]{Theorem}
\newtheorem{lemma}[equation]{Lemma}
\newtheorem{claim}{Claim}
\newtheorem{mainth}{Theorem}
\theoremstyle{definition}
\newtheorem*{ac}{Acknowledgment}
\theoremstyle{remark}
\numberwithin{equation}{section}
\begin{document}
\title[The converse of Brolin's theorem]{A generalization of the converse of Brolin's theorem}

\author[Y\^usuke Okuyama]{Y\^usuke Okuyama}
\address{
Division of Mathematics,
Kyoto Institute of Technology, Sakyo-ku,
Kyoto 606-8585 Japan}
\email{okuyama@kit.ac.jp}
\author[Ma{\l}gorzata Stawiska]{Ma{\l}gorzata Stawiska}
\address{Mathematical Reviews, 416 Fourth St., Ann Arbor, MI 48103, USA}
\email{stawiska@umich.edu}

\date{\today}

\subjclass[2010]{Primary 37F10; Secondary 31A15}
\keywords{equilibrium measure, harmonic measure,
Brolin's theorem, Lopes's theorem, complex dynamics, potential theory}

\begin{abstract}
We prove a 
generalization of Lopes's theorem,
that is, of the converse of Brolin's theorem.
\end{abstract}

\maketitle

\section{Introduction}\label{sec:intro}

Let $f$ be a rational function on $\bP^1=\bC\cup\{\infty\}$ of degree $d>1$.
Let us denote by $J(f)$ the Julia set of $f$, and by $F(f)$ the Fatou set 
$\bP^1\setminus J(f)$ of $f$. 
Let $\omega$ be the Fubini-Study area element on $\bP^1$
normalized so that $\omega(\bP^1)=1$.
Then the weak limit 
\begin{gather*}
 \mu_f:=\lim_{n\to\infty}\frac{(f^n)^*\omega}{d^n}
\end{gather*}
exists on $\bP^1$, has no atoms in $\bP^1$, and charges no
polar subsets in $\bP^1$. This probability measure $\mu_f$ is called 
{\itshape the equilibrium $($or the $($non-exceptionally$)$ balanced$)$ measure} 
of $f$ on $\bP^1$, and
is in fact the unique
probability measure $\nu$ on $\bP^1$ such that 
$f^*\nu=d\cdot\nu$ on $\bP^1$ and that $\nu(E(f))=0$,
where the exceptional set
$E(f):=\{a\in\bP^1:f^{-2}(a)=\{a\}\}$ of $f$ consists of at most two points 
in $\bP^1$ (for any $a\in E(f)$, the probability 
measure $\nu_a:=(\delta_a+\delta_{f(a)})/2$ on $\bP^1$
also satisfies $f^*\nu_a=d\cdot\nu_a$ on $\bP^1$, that is, $\nu_a$ is balanced under $f$, but $\nu_a(E(f))>0$). In particular,  $J(f)=\supp\mu_f$
and $J(f)$ is non-polar.

When $\infty\in F(f)$, let us denote by $D_\infty=D_\infty(f)$ 
the Fatou component of $f$ containing $\infty$, and by 
\begin{gather*}
 \nu_\infty=\nu_{D_\infty,\infty} 
\end{gather*}
the {\itshape harmonic measure} of $D_\infty$
with pole $\infty$, which is a probability measure on $\partial D_\infty$. 
This measure $\nu_\infty$ exists since $J(f)$ is non-polar as mentioned
in the above.

Our aim in this short notes is to prove the following.

\begin{mainth}\label{th:polynomial}
 Let $f$ be a rational function on $\bP^1$ of degree $d>1$, 
 and suppose that $\infty\in F(f)$.
 Then the following are equivalent$;$
\begin{enumerate}
 \item $f^2$ is a polynomial. \label{item:poly}
 \item $\mu_f=\nu_\infty$ on $\bP^1$.  \label{item:equal}
\end{enumerate}
\end{mainth}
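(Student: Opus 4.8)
The plan is to treat the two implications with quite different tools: the implication \eqref{item:poly}$\Rightarrow$\eqref{item:equal} is a soft consequence of Brolin's theorem, whereas \eqref{item:equal}$\Rightarrow$\eqref{item:poly} carries all the analytic content.

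For \eqref{item:poly}$\Rightarrow$\eqref{item:equal} I would first record the two standard invariances under passing to an iterate: the equilibrium measure satisfies $\mu_{f^2}=\mu_f$ (indeed $\mu_f$ is balanced under $f^2$ and gives no mass to the finite set $E(f^2)$, so by the uniqueness recalled in the introduction it is the equilibrium measure of $f^2$), and $F(f^2)=F(f)$, so that the Fatou component of $\infty$ and its harmonic measure are literally the same object for $f$ and for $f^2$. If $f^2$ is a polynomial, then $\infty$ is its totally invariant superattracting fixed point and $D_\infty$ is its basin, so Brolin's theorem applied to $f^2$ gives $\mu_{f^2}=\nu_{D_\infty,\infty}$; combining the three equalities yields $\mu_f=\nu_\infty$.

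For \eqref{item:equal}$\Rightarrow$\eqref{item:poly} I would proceed as follows. Comparing supports gives $J(f)=\supp\mu_f=\supp\nu_\infty\subseteq\partial D_\infty\subseteq J(f)$, so $\partial D_\infty=J(f)$ and $\nu_\infty$ charges the whole Julia set. Next I would use that every Fatou component maps properly onto its image, so $f\colon D_\infty\to f(D_\infty)$ is a proper branched covering and harmonic measure transforms naturally: $f_*\nu_{D_\infty,\infty}=\nu_{f(D_\infty),f(\infty)}$. Since $f_*\mu_f=\mu_f$, hypothesis \eqref{item:equal} gives $\nu_{f(D_\infty),f(\infty)}=\nu_{D_\infty,\infty}=\nu_\infty$. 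Writing $g,\tilde g$ for the Green's functions of $D_\infty$ with pole $\infty$ and of $f(D_\infty)$ with pole $f(\infty)$, the identity $\tfrac1{2\pi}\Delta g=\nu_\infty-\delta_\infty$ and its analogue show that $g-\tilde g$ has Laplacian $\delta_{f(\infty)}-\delta_\infty$ on $\bP^1$; hence $g-\tilde g=\log|{\cdot}-f(\infty)|+\text{const}$ when $f(\infty)\neq\infty$, while $g$ and $\tilde g$ both vanish on $J(f)=\partial D_\infty=\partial f(D_\infty)$. This yields a dichotomy: if $f(\infty)\neq\infty$, then $\log|z-f(\infty)|$ is constant on $J(f)$, so $J(f)$ is a round circle centred at $f(\infty)$, the two complementary disks are swapped by $f$, and $f^2(\infty)=\infty$; if $f(\infty)=\infty$, then $f(D_\infty)=D_\infty$ and $f$ restricts to a proper self-map of $D_\infty$ fixing the pole. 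In either case I am reduced to a rigidity statement, namely that $\mu_f=\nu_\infty$ together with $f(D)=D$ (with $D=D_\infty$, after replacing $f$ by $f^2$ in the swap case) forces $\infty$ to be totally ramified, i.e.\ $f^{-2}(\infty)=\{\infty\}$, equivalently $\infty\in E(f)$, which is exactly the assertion that $f^2$ is a polynomial.

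I expect this rigidity step to be the main obstacle: one must rule out a second Fatou component mapping onto $D_\infty$ and rule out partial ramification at the pole. The tool I would use is the dynamical functional equation $g\circ f=d\,g-\log|Q|+c$ for the Green's function, obtained by pulling $\tfrac1{2\pi}\Delta g=\mu_f-\delta_\infty$ back through $f^*\mu_f=d\mu_f$, where $Q$ is the denominator of $f$; combined with the nonnegativity of $g$ and the maximum principle on $D_\infty$, the correction $\log|Q|$ should be forced to be harmonic there (no zeros of $Q$, i.e.\ no finite poles of $f$) by confronting it with the non-atomic measure $\mu_f$. Equivalently, after uniformizing $D_\infty$ one reduces to the assertion that a finite Blaschke product whose measure of maximal entropy is arc length must be a power map. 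Making this comparison rigorous in the presence of possibly many bounded Fatou components and of a merely non-polar, a priori irregular Julia set is where I anticipate the real work to lie.
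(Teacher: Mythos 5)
Your proof of (\ref{item:poly})$\Rightarrow$(\ref{item:equal}) is correct and is essentially the paper's (Brolin's theorem applied to $f^2$, together with $\mu_{f^2}=\mu_f$ and $F(f^2)=F(f)$). The converse, however, has a genuine gap, and you name it yourself: after the support comparison, the pushforward identity $f_*\nu_{D_\infty,\infty}=\nu_{f(D_\infty),f(\infty)}$, and the Green's-function dichotomy, what you have achieved is only a \emph{reduction} to the rigidity statement ``$\mu_f=\nu_\infty$ and $f(D_\infty)=D_\infty$ force $\infty$ to be exceptional'' --- which is precisely the authors' earlier result \cite[Theorem 1]{OS11}, i.e.\ Lopes-type rigidity, i.e.\ the entire content of the implication being proved. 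For that step you offer only the heuristic that in the functional equation $g\circ f=d\,g-\log|Q|+c$ the term $\log|Q|$ ``should be forced to be harmonic'' on $D_\infty$, and you concede that making this rigorous is where the real work lies. No argument is given, and none of the paper's actual mechanism appears in your sketch: from the pullback formula \eqref{eq:pullback} and $p_{\mu_f}\equiv I_{\mu_f}$ off $D_\infty$, the paper derives that $|F_0(1,\cdot)|$ is \emph{constant} on $\bC\setminus(D_\infty\cup f^{-1}(D_\infty))$; since the modulus of a nonconstant polynomial cannot be constant on a nonempty open set, this forces $F(f)=D_\infty\cup f^{-1}(D_\infty)$ and yields Claim \ref{th:totinv} (either $F(f)=D_\infty$ or $f^2$ is a polynomial); in the remaining case $F(f)=D_\infty$, a suitably normalized lemniscate $L_{cF}$ contains $J(f)$, Orevkov's irreducibility theorem together with B\'ezout gives $L_{(cF)^n}=L_{cF}$ and $f(L_{cF})\subset L_{cF}$, and then \eqref{eq:pullback} makes $p_{\mu_f}-I_{\mu_f}$ grow like $d^n$ along the orbit of a point of $L_{cF}\cap F(f)$ while it stays bounded on the compact forward-invariant set $L_{cF}$ --- a contradiction. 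The lemniscate identity, the irreducibility input, and the orbit-growth contradiction are all absent from your proposal.

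Beyond the missing core, the reduction itself has a concrete flaw in the ``swap'' case. There you apply the rigidity statement to $f^2$ (which fixes $D_\infty$); its conclusion is then $(f^2)^{-2}(\infty)=\{\infty\}$, i.e.\ that $f^4$ is a polynomial, which is strictly weaker than statement (\ref{item:poly}): it leaves open that $f^2$ swaps $\infty$ with the finite point $f^2(\infty)$. To conclude that $f^2$ is a polynomial you would still need $f^2(\infty)=\infty$, e.g.\ by pinning $f$ down to the form $a(z-b)^{-d}+b$, and that again requires the rigidity analysis rather than following from it. Two smaller technical points: Green's functions of general domains vanish only quasi-everywhere on the boundary, so your argument first gives $J(f)\subseteq\{|z-f(\infty)|=r\}$ only up to a polar set, and upgrading this to an honest inclusion (and then to equality with the circle, which your two-disk picture needs) requires the observation that a nonempty relatively open subset of $J(f)$ cannot be polar; and the correct dichotomy is governed by whether $f(D_\infty)=D_\infty$, not by whether $f(\infty)=\infty$, since $f(\infty)$ can be a finite point of $D_\infty$.
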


The implication (\ref{item:poly})$\Rightarrow$(\ref{item:equal}) in Theorem \ref{th:polynomial} follows from  the work of Brolin \cite{Brolin}, 
so we will show the converse implication
(\ref{item:equal})$\Rightarrow$(\ref{item:poly}). 
The statement (\ref{item:poly}) is equivalent to the following statement that  
{\itshape either $f$ is a polynomial 
or $f^{-2}(\infty)=\{\infty\}\not\subset f^{-1}(\infty)$},
the latter possibility in which
never occurs if $f(D_\infty)=D_\infty$ and is in fact equivalent to 
{\itshape $f$ having the form $a(z-b)^{-d}+b$ 
for some $a \in \mathbb{C}^*$ and some $b \in \mathbb{C}$}. 
In particular, the iteration order $2$ of $f$ in the statement (\ref{item:poly})
is best possible.

The implication (\ref{item:equal})$\Rightarrow$(\ref{item:poly}) 
in Theorem \ref{th:polynomial} was first claimed
by Oba and Pitcher \cite{ObaPitcher}, also assuming
$f(\infty)=\infty$ (so $f(D_\infty)=D_\infty$) and $f'(\infty)=0$. 
It was established by Lopes \cite{Lopes86} (see also Lalley \cite{Lalley92}
and Ma\~n\'e--da Rocha \cite{ManeDaRocha92}) under a 
relaxed additional assumption $f(\infty)=\infty$, 
and by the present authors \cite[Theorem 1]{OS11} under
$f(D_\infty)=D_\infty$.

Theorem \ref{th:polynomial} is not much stronger than 
\cite[Theorem 1]{OS11}. Indeed, with a little extra effort,
it can be obtained by combining \cite[Theorem 1]{OS11}
with Sullivan's no wandering domain theorem \cite{Sullivan85}
and the Riemann-Hurwitz formula. 
In what follows, we will prove Theorem \ref{th:polynomial} in an alternative way. 
We will first improve some part (see Claim \ref{th:totinv} below) in the proof of 
the implication \cite[(\ref{item:equal})$\Rightarrow$(\ref{item:poly}) in Theorem 1]{OS11} and then give a simple proof of the remaining part,
using the following pleasant theorem due to Orevkov.

\begin{theorem}[Orevkov {\cite[{a consequence of Corollary 1}]{orevkov18}}]\label{th:invcircle}
For every $P\in\bC[z]$ of degree $>0$,
the lemniscate $\{z\in\bC:|P(z)|=1\}$ is an irreducible real-algebraic curve
in $\bC\cong\bR^2$ 
$($identifying $z\in\bC$ with $(\Re z,\Im z)\in\bR^2)$, 
that is, this lemniscate coincides with 
the zero set $\{(x,y)\in\bR^2:\phi(x,y)=0\}$ of some $\phi(x,y)\in\bR[x,y]$
irreducible as an element of $\bC[x,y]$.
\end{theorem}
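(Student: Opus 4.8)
The plan is to produce a single polynomial $\phi\in\bR[x,y]$ that is irreducible over $\bC$ and whose real zero set is exactly the lemniscate $\Lambda:=\{z\in\bC:|P(z)|=1\}$. Let $\overline{P}$ denote the polynomial obtained by conjugating the coefficients of $P$, so that $|P(z)|^2=P(z)\,\overline{P}(\bar z)$; writing $z=x+iy$, the function $\phi_0(x,y):=|P(x+iy)|^2-1$ lies in $\bR[x,y]$, being a squared modulus minus $1$, and $\Lambda=\{\phi_0=0\}$. First I would record two soft facts about $\Lambda$: it is nonempty, since the continuous function $|P|$ vanishes at a root of $P$ and tends to $\infty$ at $\infty$ and so attains the value $1$; and it has no isolated points, since near any of its points $\log|P|$ is harmonic and a nonconstant harmonic function has no isolated zeros by the maximum principle. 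Hence $\Lambda$ is a perfect, in particular infinite, closed set.

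Next I would make the elementary reduction that isolates the real difficulty. Factor $\phi_0$ into $\bR$-irreducible factors and let $\phi$ be the product of those factors whose real zero set is $1$-dimensional. Removing the remaining factors does not change the real zero set: they contribute only finitely many real points, each of which, since $\Lambda$ has no isolated points, is a limit of points of the $1$-dimensional part and hence already lies on the closed set $\{\phi=0\}$; so $\{\phi=0\}=\Lambda$. The theorem now reduces to two assertions. The soft one is that \emph{every $\bR$-irreducible factor with $1$-dimensional real locus is already irreducible over $\bC$}: such a factor $\psi$ is over $\bC$ either irreducible or a product $\chi\bar\chi$ of two conjugate irreducibles, and in the latter case $\psi(p)=|\chi(p)|^2$ at every real point $p$, so a real zero of $\psi$ forces $\chi(p)=\bar\chi(p)=0$ and therefore lies in the finite set $\{\chi=0\}\cap\{\bar\chi=0\}$, contradicting that $\{\psi=0\}$ carries infinitely many real points. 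The substantive one is that \emph{$\phi_0$ has exactly one $\bR$-irreducible factor with $1$-dimensional real locus}; granting it, $\phi$ is $\bR$-irreducible, hence by the soft assertion irreducible over $\bC$, and $\{\phi=0\}=\Lambda$, which is the theorem.

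To attack the substantive assertion I would complexify. Introduce independent variables $z,w$ and the polynomial $F(z,w):=P(z)\,\overline{P}(w)-1$, which corresponds to $\phi_0$ under the $\bC$-linear change of coordinates $z=x+iy$, $w=x-iy$; the real points are the fixed locus $\{w=\bar z\}$ of the antiholomorphic involution $\sigma(z,w)=(\bar w,\bar z)$, and they are precisely $\Lambda$. The projection $(z,w)\mapsto c:=P(z)$ realizes $\{F=0\}$ (on which $\overline{P}(w)=1/c$) as a branched cover of the $c$-sphere of degree $m^2$, where $m=\deg P$, and the fiber over a generic $c_0\in\partial\bD$ contains exactly $m$ points fixed by $\sigma$, namely the points of $\Lambda$ over $c_0$. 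An $\bR$-irreducible factor of $\phi_0$ with $1$-dimensional real locus corresponds to a $\sigma$-invariant irreducible factor of $F$ carrying such fixed points, so the substantive assertion is equivalent to the statement that all $m$ real fiber points lie on a single irreducible component of $\{F=0\}$. I expect this to be the main obstacle: it is a transitivity statement for the monodromy group of the cover $\{F=0\}\to\bP^1$ acting on the real fiber, and it is exactly here that I would invoke Orevkov's Corollary 1 to supply the irreducibility of the distinguished factor. I would emphasize that connectedness of $\Lambda$ is neither available, since the lemniscate may consist of several ovals, nor sufficient, so the argument must deliver algebraic irreducibility directly rather than through the topology of $\Lambda$.
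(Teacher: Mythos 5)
The first thing to note is that the paper contains \emph{no proof} of this statement: it is quoted from Orevkov's note, as the bracketed attribution ``a consequence of Corollary 1'' indicates, and is then used as a black box (in the proof of Claim \ref{th:coincidence}). So the comparison is really between your attempt and Orevkov's result itself.

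Your preparatory work is sound. The facts that the lemniscate is nonempty and perfect, the reduction discarding the $\bR$-irreducible factors of $\phi_0=|P|^2-1$ with finite real locus, the observation that an $\bR$-irreducible factor with infinite real zero set must already be $\bC$-irreducible (via the $\psi=\chi\bar\chi$ dichotomy and B\'ezout), and the translation into the fiber-product picture for $F(z,w)=P(z)\overline{P}(w)-1$ are all correct. You are also right that $\phi_0$ itself need not be $\bC$-irreducible --- for $P(z)=z^m$ it factors, with $x^2+y^2-1$ the only factor carrying the real points --- so formulating everything in terms of a distinguished factor, rather than claiming irreducibility of $\phi_0$, is genuinely necessary. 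But the argument stops exactly where the theorem begins. The ``substantive assertion'' --- that all $m$ real points of a generic fiber lie on a single irreducible component, i.e., that the relevant monodromy is transitive on them --- \emph{is} Orevkov's theorem: it is the entire nontrivial content of the statement, and invoking ``Orevkov's Corollary 1'' at that point makes the argument circular as a proof, since the statement under review is nothing but that corollary recast in real-zero-set form. What you have actually written is therefore not a proof but a correct, essentially routine derivation of the quoted theorem from Orevkov's algebraic irreducibility result --- which is, to be fair, exactly the derivation that the paper's attribution glosses over, and so your text could serve as an explication of that attribution. But if the goal was to prove the theorem, the missing piece is the monodromy/transitivity argument (Orevkov's actual contribution), and nothing in the proposal supplies it.
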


We conclude this section with some background material.
For more details, see e.g.\ the books \cite{Ransford95,Milnor3rd}, respectively.

\subsection*{Potential theory}
For every probability measure $\nu$ on $\bC$ having the compact support,
let $p_\nu$ be the {\itshape $($logarithmic$)$ potential} of $\nu$ on $\bC$
with pole $\infty$
so that $p_\nu(z)=\int_\bC\log|z-w|\nu(w)$ on $\bC$. 
Then $\rd\rd^c p_\nu=\nu-\delta_\infty$ on $\bP^1$ 
(as a $\delta$-subharmonic function on $\bP^1$) and 
$p_\nu(z)=\log|z|+O(|z|^{-1})$ as $z\to\infty$.
We also set 
\begin{gather*}
 I_\nu:=\int_{\bC}p_\nu\nu>-\infty, 
\end{gather*}
which is called the energy of $\nu$ with pole $\infty$.

Let $D$ be a domain in $\bP^1$
containing $\infty$ such that $\bC\setminus D$ is non-polar (i.e.,
$I_\nu>-\infty$ for some probability measure $\nu$ supported by $\bC\setminus D$).
The {\itshape equilibrium mass distribution} on $\bC\setminus D$
with pole $\infty$ is the unique
probability measure $\nu$ on $\bC\setminus D$ such that $p_\nu\ge I_\nu$ on $\bC$,
$p_\nu>I_\nu$ on $D$, and $p_\nu\equiv I_{\nu}$
on $\bC\setminus D$ except for some (possibly empty)
$F^\sigma$-polar subset in $\partial D$. 
This probability measure is supported by $\partial D$ and 
coincides with the {\itshape harmonic measure} $\nu_{D,\infty}$
of $D$ with pole $\infty$.

\subsection*{Complex dynamics}
Let $f$ be a rational function on $\bP^1$ of degree $d>1$.
The Julia set $J(f)$ of $f$ is defined by the set of all non-normality points
of the family $(f^n)_{n\in\bN}$, and the Fatou set $F(f)$ of $f$ by $\bP^1\setminus J(f)$.

A component of $F(f)$ is called a Fatou component of $f$. A Fatou component
of $f$ is properly mapped by $f$ to a Fatou component of $f$, and
the preimage of a Fatou component of $f$ under $f$ consists of
(at most $d$) Fatou components of $f$. 

Let $F(z_0,z_1)=(F_0(z_0,z_1),F_1(z_0,z_1))\in(\bC[z_0,z_1]_d)^2$
be an ordered pair of homogeneous polynomials $F_0,F_1$ of degree $d$
such that   $f(z)=F_1(1,z)/F_0(1,z)$. Such an $F$ 
is unique up to multiplication in $\bC^*$  and is called
a (non-degenerate homogeneous) {\itshape lift} of $f$.
We note that for every $n\in\bN$, $\deg(f^n)=d^n$, and
the $n$-th iterate {$F^n$} of $F${, which is written as}
\begin{gather*}
 F^n(z_0,z_1)=\bigl(F_0^{(n)}(z_0,z_1),F_1^{(n)}(z_0,z_1)\bigr)\in(\bC[z_0,z_1]_{d^n})^2,
\end{gather*}
is a lift of $f^n$.
The {\itshape uniform} limit
\begin{gather*}
 G^F:=\lim_{n\to\infty}\frac{\log\|F^n\|}{d^n}\quad\text{on }\bC^2\setminus\{(0,0)\} 
\end{gather*}
exists,
where $\|\cdot\|$ is the Euclidean norm on $\bC^2$, and is called 
the {\itshape  escape rate} function of $F$ on $\bC^2\setminus\{(0,0)\}$.
In particular, the function $G^F$ 
is a continuous and plurisubharmonic function on $\bC^2\setminus\{(0,0)\}$,
and we have the equality
\begin{gather*}
 G^F(cZ)=G^F(Z)+\log|c| 
\end{gather*}
for every $c\in\bC^*$ and every $Z\in\bC^2\setminus\{(0,0)\}$, the equality
\begin{gather*}
 G^F\circ F=d\cdot G^F
\end{gather*}
on $\bC^2\setminus\{(0,0)\}$, the identity
\begin{gather*}
 G^{cF}=G^F+\frac{\log|c|}{d-1}\quad\text{on }\bC^2\setminus\{(0,0)\} 
\end{gather*}
for every $c\in\bC^*$,
and the equality $\rd\rd^c G^F(1,\cdot)=\mu_f-\delta_\infty$ on $\bP^1$
(as a $\delta$-subharmonic function on $\bP^1$).
We also note that for every $n\in\bN$,
$G^{F^n}=G^F$ on $\bC^2\setminus\{(0,0)\}$ and 
$\mu_{f^n}=\mu_f$ on $\bP^1$.

\section{Proof of Theorem \ref{th:polynomial}}
Let $f$ be a rational function on $\bP^1$ of degree $d>1$.
As we already mentioned in Section \ref{sec:intro}, we only need to show 
the implication (\ref{item:equal})$\Rightarrow$(\ref{item:poly}). 

Fix a lift $F$ of $f$, and suppose that $\infty\in F(f)$. 

\subsection*{Preliminary lemmas} Let us recall the following 
from \cite[\S 3]{OS11}. 

\begin{lemma}\label{th:dynamical}
The potential $p_{\mu_f}$ is continuous on $\bC$. More precisely,
\begin{gather}
  G^F(1,\cdot)=p_{\mu_f}+G^F(0,1)\quad\text{on }\bC.\label{eq:dynamical}
\end{gather}
\end{lemma}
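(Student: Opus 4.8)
The plan is to show that the two sides of \eqref{eq:dynamical} differ by a function that is harmonic on $\bC$, and that rigidity at $\infty$ forces this function to be the constant $G^F(0,1)$; the continuity of $p_{\mu_f}$ will then be inherited from that of $G^F$.

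First I would introduce $h := G^F(1,\cdot) - p_{\mu_f}$ on $\bC$. The function $G^F(1,\cdot)$ is subharmonic, being the restriction of the continuous plurisubharmonic function $G^F$ to the complex line $\{z_0=1\}$ (which never meets $(0,0)$), while $p_{\mu_f}$ is subharmonic as a logarithmic potential; thus $h$ is $\delta$-subharmonic. Using the two identities $\rd\rd^c G^F(1,\cdot)=\mu_f-\delta_\infty$ and $\rd\rd^c p_{\mu_f}=\mu_f-\delta_\infty$ recalled in the excerpt, and noting that $\supp\mu_f=J(f)\subset\bC$ because $\infty\in F(f)$, the Riesz measures cancel on $\bC$, so $\rd\rd^c h=0$ there. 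By Weyl's lemma $h$ coincides almost everywhere with a harmonic function $\tilde h$ on $\bC$. Since $p_{\mu_f}=G^F(1,\cdot)-h$ then agrees almost everywhere with the continuous function $G^F(1,\cdot)-\tilde h$, comparing circle averages (via the sub-mean value property of the subharmonic $p_{\mu_f}$) forces $p_{\mu_f}$ to be continuous and to equal it; in particular $h=\tilde h$ is genuinely harmonic on $\bC$, which already settles the continuity assertion.

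It then remains to identify $\tilde h$ with $G^F(0,1)$, and for this I would examine the behavior as $z\to\infty$. Writing $(1,z)=z\cdot(1/z,1)$ for $z\neq0$ and applying the homogeneity $G^F(cZ)=G^F(Z)+\log|c|$ with $c=z$ gives $G^F(1,z)=\log|z|+G^F(1/z,1)$, so continuity of $G^F$ at $(0,1)$ yields $G^F(1,z)-\log|z|\to G^F(0,1)$. Combined with $p_{\mu_f}(z)=\log|z|+O(|z|^{-1})$, this shows $h(z)\to G^F(0,1)$ as $z\to\infty$. Hence $h$ is a harmonic function on $\bC\cong\bR^2$ with a finite limit at $\infty$, so it is bounded and therefore constant by Liouville's theorem; the constant is the limiting value $G^F(0,1)$. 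Substituting back gives \eqref{eq:dynamical}.

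The computation is short once the setup is in place; the steps demanding the most care are the passage from the vanishing of the Riesz measure of $h$ to the genuine (pointwise, not merely almost-everywhere) continuity of $p_{\mu_f}$, and the concluding rigidity. Both rest only on standard potential theory — Weyl's lemma, the sub-mean value property of subharmonic functions, and Liouville's theorem on the plane — so I would not expect a serious obstacle here.
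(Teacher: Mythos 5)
Your proof is correct, and its skeleton --- compare the Riesz measures of the two sides, conclude the difference is harmonic, and identify the constant from the expansion at $\infty$ --- matches the paper's; but the mechanism you use to get constancy is genuinely different, so a comparison is worthwhile. The paper works directly on the compact surface $\bP^1$: since $G^F(1,\cdot)$ and $p_{\mu_f}$ are $\delta$-subharmonic on $\bP^1$ with the \emph{same} $\rd\rd^c$ there, namely $\mu_f-\delta_\infty$ (the point masses at $\infty$ cancel as well), their difference is harmonic on all of $\bP^1$ and hence constant by compactness; the limit computation at $\infty$ then serves only to evaluate that constant. You instead stay on $\bC$, where the cancellation gives $\rd\rd^c h=0$ only in the plane, and you must therefore do two extra things: repair the almost-everywhere conclusion of Weyl's lemma into a pointwise identity (your circle-average argument, resting on upper semicontinuity and the sub-mean value property of $p_{\mu_f}$, does this correctly), and supply a substitute for compactness, namely that $h$ has the finite limit $G^F(0,1)$ at $\infty$, hence is bounded, hence constant by Liouville's theorem for harmonic functions on $\bR^2$. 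What your route buys is self-containedness: it avoids the formalism of $\delta$-subharmonic functions on $\bP^1$ and uses only planar potential theory, and it makes the continuity assertion for $p_{\mu_f}$ explicit rather than implicit. What it costs is that the asymptotics at $\infty$ must do double duty (boundedness for Liouville, plus identification of the constant), and the a.e.-versus-everywhere issue must be handled by hand, which the paper's global formulation on $\bP^1$ sidesteps entirely. Both proofs identify the constant by the identical computation $G^F(1,z)=\log|z|+G^F(1/z,1)\to\log|z|+G^F(0,1)$ together with $p_{\mu_f}(z)=\log|z|+O(|z|^{-1})$ as $z\to\infty$.
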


\begin{proof}
 The values on $\bP^1$ of the $\rd\rd^c$ operator on
 both $G^F(1,\cdot)$ and $p_{\mu_f}$ 
 (as $\delta$-subharmonic functions on $\bP^1$) are
 $\mu_f-\delta_\infty$. Hence we have
 $G^F(1,\cdot)-p_{\mu_f}\equiv C$ on $\bP^1$ for some $C\in\bR$. 
 Moreover, we have
 $C=\lim_{z\to\infty}(G^F(1/z,1)-(p_{\mu_f}-\log|z|))=G^F(0,1)$.
\end{proof}

\begin{lemma}[the pullback formula of $p_{\mu_f}$ under $f$]\label{th:pullback}
For every $n\in\bN$,
\begin{multline}
 p_{\mu_f}\circ f^n+\log\bigl|F_0^{(n)}(1,\cdot)\bigr|\\
=d^n\cdot p_{\mu_f}+(d^n-1)G^F(0,1)\quad\text{on }
\bC\setminus f^{-n}(\infty).\label{eq:pullback} 
\end{multline}
\end{lemma}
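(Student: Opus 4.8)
The plan is to reduce everything to the escape-rate function $G^F$ and then exploit its two defining features: the logarithmic homogeneity $G^F(cZ)=G^F(Z)+\log|c|$ and the dynamical functional equation, iterated to $G^F\circ F^n=d^n\cdot G^F$ on $\bC^2\setminus\{(0,0)\}$. Since Lemma \ref{th:dynamical} already identifies $p_{\mu_f}$ with $G^F(1,\cdot)-G^F(0,1)$ on $\bC$, I expect the left-hand term $p_{\mu_f}\circ f^n$ to transform in a controlled way once $f^n$ is lifted to $F^n$, with the correction $\log|F_0^{(n)}(1,\cdot)|$ appearing precisely as the homogeneity factor.

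Concretely, for $z\in\bC\setminus f^{-n}(\infty)$ one has $F_0^{(n)}(1,z)\neq 0$, so I would write the point $(1,f^n(z))\in\bC^2$ as $(F_0^{(n)}(1,z))^{-1}\cdot F^n(1,z)$. Applying the homogeneity of $G^F$ with the nonzero scalar $c=(F_0^{(n)}(1,z))^{-1}$ then gives
\[
G^F(1,f^n(z))=G^F\bigl(F^n(1,z)\bigr)-\log\bigl|F_0^{(n)}(1,z)\bigr|,
\]
and the iterated functional equation turns $G^F(F^n(1,z))$ into $d^n\cdot G^F(1,z)$. I would then substitute Lemma \ref{th:dynamical} on both sides---at the point $f^n(z)$ on the left and at $z$ inside $d^n\cdot G^F(1,z)$ on the right---so that the two copies of the constant $G^F(0,1)$ combine into the asserted coefficient $(d^n-1)G^F(0,1)$. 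Rearranging yields \eqref{eq:pullback}.

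Since the computation is essentially forced, I expect no substantive obstacle; the only points requiring care are bookkeeping ones. First, the homogeneity step is legitimate only where the scalar is nonzero, which is exactly why the identity is asserted on $\bC\setminus f^{-n}(\infty)$, the complement of the zero set $\{F_0^{(n)}(1,\cdot)=0\}$ of the lifted denominator of $f^n$. Second, because $G^F$ is continuous on $\bC^2\setminus\{(0,0)\}$, the potential $p_{\mu_f}$ is finite throughout $\bC$, so no $\pm\infty$ ambiguities arise and the rearrangement is valid pointwise. An alternative route would prove the case $n=1$ and induct, but invoking $G^F\circ F^n=d^n\cdot G^F$ directly is cleaner and disposes of all $n$ simultaneously.
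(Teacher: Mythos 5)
Your proof is correct and follows essentially the same route as the paper's: both hinge on Lemma \ref{th:dynamical}, the homogeneity $G^F(cZ)=G^F(Z)+\log|c|$ applied with $c=(F_0^{(n)}(1,z))^{-1}$, and the functional equation for $G^F$. The only cosmetic difference is that the paper reduces to $n=1$ without loss of generality (using $G^{F^n}=G^F$ and $\mu_{f^n}=\mu_f$), whereas you handle all $n$ at once via $G^F\circ F^n=d^n\cdot G^F$; the computation is identical.
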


\begin{proof}
Without loss of generality, we can assume that $n=1$.
Using Lemma \ref{th:dynamical}, we can compute as
\begin{align*}
 p_{\mu_f}\circ f=&G^F(1,f(\cdot))-G^F(0,1)\\
=&G^F\circ F-\log|F_0(1,\cdot)|-G^F(0,1)\\
=&d\cdot G^F-\log|F_0(1,\cdot)|-G^F(0,1)\\
=&d(p_{\mu_f}+G^F(0,1))-\log|F_0(1,\cdot)|-G^F(0,1)\\
=&d\cdot p_{\mu_f}-\log|F_0(1,\cdot)|+(d-1)G^F(0,1)
\end{align*} 
on $\bC\setminus f^{-1}(\infty)$.
\end{proof}

\subsection*{Proof of (\ref{item:equal})$\Rightarrow$(\ref{item:poly})}
Suppose now that $\mu_f=\nu_\infty$ on $\bP^1$ 
(then $p_{\mu_f}\equiv I_{\mu_f}$ on $\bC\setminus D_\infty$),
and suppose that $f$ is not a polynomial,
that is, $F_0(1,\cdot)$ is non-constant on $\bC$.

Then by \eqref{eq:pullback} for $n=1$ (and $p_{\mu_f}\equiv I_{\mu_f}$ on $\bC\setminus D_\infty$), we have 
\begin{gather}
 |F_0(1,\cdot)|\equiv e^{(d-1)(I_{\mu_f}+G^F(0,1))}
\quad\text{on }\bC\setminus(D_\infty\cup f^{-1}(D_\infty)).
 \label{eq:lemniscate}
\end{gather}


\begin{claim}\label{th:totinv}
 Either $D_\infty=F(f)$, 
 or $f^2$ is a polynomial.
\end{claim}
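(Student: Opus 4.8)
The plan is to feed \eqref{eq:lemniscate} into the combinatorics of Fatou components and then reduce the non-trivial alternative to the already known invariant case. Write $P:=F_0(1,\cdot)$, which is a nonconstant polynomial since $f$ is not a polynomial, and set $c:=e^{(d-1)(I_{\mu_f}+G^F(0,1))}$, so that \eqref{eq:lemniscate} reads $|P|\equiv c$ on $\bC\setminus(D_\infty\cup f^{-1}(D_\infty))$. Since $P$ is a nonconstant holomorphic function, it is an open map, so the lemniscate $\{|P|=c\}$ has empty interior; otherwise $P$ would map an open set into the circle $\{|w|=c\}$, which is absurd. Hence the closed set $\bC\setminus(D_\infty\cup f^{-1}(D_\infty))$ has empty interior as well.

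First I would translate this into a statement about Fatou components. Recall that $f$ maps each Fatou component properly onto a Fatou component, and that $f^{-1}(D_\infty)$ is exactly the union of those components sent onto $D_\infty$. Consequently, if $W$ is a Fatou component with $W\neq D_\infty$ and $f(W)\neq D_\infty$, then $W$ is disjoint from $D_\infty\cup f^{-1}(D_\infty)$ and hence $W\subset\{|P|=c\}$, which is impossible as $W$ is open. Therefore \emph{every} Fatou component other than $D_\infty$ is mapped onto $D_\infty$ by $f$.

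Next I would run a short surjectivity count. Since $f$ is surjective and sends components onto components, every Fatou component arises as $f(C)$ for some component $C$, and by the previous paragraph the only possible images are $f(D_\infty)$ and $D_\infty$. Assume now that $D_\infty\neq F(f)$, so that some component $W\neq D_\infty$ exists. If $f(D_\infty)=D_\infty$, then $D_\infty$ would be the only image component and hence the only Fatou component, contradicting $D_\infty\neq F(f)$; thus $f(D_\infty)=:W_0\neq D_\infty$, and then $f(W_0)=D_\infty$. The image components are now exactly $\{D_\infty,W_0\}$, so these are all the Fatou components: $F(f)=D_\infty\sqcup W_0$, and $f$ interchanges them. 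In particular $f^{-1}(D_\infty)=W_0$ and $f^{-1}(W_0)=D_\infty$, whence $f^2(D_\infty)=D_\infty$ and $(f^2)^{-1}(D_\infty)=D_\infty$; that is, $D_\infty$ is a completely invariant Fatou component of $f^2$.

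Finally I would reduce to the invariant case. Since the harmonic measure $\nu_\infty=\nu_{D_\infty,\infty}$ depends only on the domain $D_\infty$, and $\mu_{f^2}=\mu_f=\nu_\infty$ with $f^2(D_\infty)=D_\infty$, the map $f^2$ (of degree $d^2>1$) satisfies the hypotheses under which \cite[Theorem 1]{OS11} applies; this yields that $f^2$ is a polynomial, the exceptional swapping form $a(z-b)^{-d^2}+b$ being excluded because it would force $f^2(D_\infty)\neq D_\infty$. This gives the second alternative and proves the claim. The hard part will be the combinatorial step: ruling out $f(D_\infty)=D_\infty$ in the presence of another component and pinning down that there are exactly two components interchanged by $f$, all extracted from the single input that $\bC\setminus(D_\infty\cup f^{-1}(D_\infty))$ has empty interior; some care is also needed to record that $\nu_{D_\infty,\infty}$ is the common equilibrium measure of $f$ and $f^2$, so that \cite[Theorem 1]{OS11} genuinely applies to $f^2$.
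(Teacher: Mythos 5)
Your proof is correct, but it finishes by a genuinely different route than the paper. The opening move is the same: the open mapping theorem applied to the nonconstant polynomial $F_0(1,\cdot)$ together with \eqref{eq:lemniscate} forces every Fatou component other than $D_\infty$ to be mapped onto $D_\infty$, which is exactly the paper's first step $F(f)=D_\infty\cup f^{-1}(D_\infty)$. Your surjectivity count then sharpens the paper's dichotomy ($f(D_\infty)=D_\infty$ versus $f^2(D_\infty)=D_\infty$) into the precise structure statement that either $F(f)=D_\infty$, or $F(f)=D_\infty\sqcup W_0$ with $f$ interchanging the two components, so that $D_\infty$ is completely invariant under $f^2$. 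The divergence is in how the swapping case is closed: the paper stays self-contained, applying the pullback formula \eqref{eq:pullback} with $n=2$ to obtain \eqref{eq:lemniscate_2} and concluding that $F_0^{(2)}(1,\cdot)$ is constant --- i.e., $f^2$ is a polynomial --- because some Fatou open set must then lie inside the lemniscate of $F_0^{(2)}(1,\cdot)$; you instead invoke \cite[Theorem 1]{OS11} for the iterate $f^2$, after verifying its hypotheses ($F(f^2)=F(f)$, $f^2(D_\infty)=D_\infty$, and $\mu_{f^2}=\mu_f=\nu_\infty$, the last using $\mu_{f^n}=\mu_f$ and the fact that harmonic measure depends only on the domain and pole). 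That reduction is legitimate, since \cite[Theorem 1]{OS11} is an independent, previously published result, and your reduction is in fact leaner than the one sketched in the paper's introduction: you need neither Sullivan's no wandering domain theorem nor the Riemann--Hurwitz formula, only the lemniscate identity and a counting argument. What it costs is self-containedness: the stated purpose of Claim \ref{th:totinv} is to improve the corresponding part of the proof of \cite[Theorem 1]{OS11} rather than quote that theorem as a black box, and the paper's $n=2$ trick shows that the pullback formula alone settles the swapping case. In short, your approach buys the exact Fatou component count and a quick finish; the paper's buys a proof of the main theorem that does not rest on the earlier one.
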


\begin{proof}
 It is clear that $F(f)\supset D_\infty \cup f^{-1}(D_\infty)$.
 We claim that $F(f)=D_\infty \cup f^{-1}(D_\infty)$; 
for, otherwise, there is a Fatou component
 $U$ of $f$ in $F(f) \setminus (D_\infty \cup f^{-1}(D_\infty))$. 
 This is impossible by \eqref{eq:lemniscate}
 (and since $F_0(1,\cdot)$ is non-constant on $\bC$).

 Hence $F(f)=D_\infty \cup f^{-1}(D_\infty)$. 
 Then by \eqref{eq:lemniscate} (and since $F_0(1,\cdot)$ is non-constant on $\bC$),
 the Fatou component $f(D_\infty)$ is either $D_\infty$ or 
 a component of $f^{-1}(D_\infty)(\setminus D_\infty)$.
In the latter case we in fact have $f^2(D_\infty)=D_\infty$.
 
 Suppose first that $f(D_\infty)=D_\infty$. 
 We claim that $f^{-1}(D_\infty)=D_\infty$;
 for, otherwise, there is a Fatou component $U$ of $f$ 
 in $f^{-1}(D_\infty)\setminus D_\infty$, and then
 $(\emptyset\neq)f^{-1}(U)\subset F(f)\setminus(D_\infty \cup f^{-1}(D_\infty))$. 
 This is impossible by \eqref{eq:lemniscate}
 (and since $F_0(1,\cdot)$ is non-constant on $\bC$). 
Once $f^{-1}(D_\infty)=D_\infty$ is at our disposal, we have $F(f)=D_\infty$
 by \eqref{eq:lemniscate} (and since $F_0(1,\cdot)$ is non-constant on $\bC$).
 
 Suppose next that $f^2(D_\infty)=D_\infty$. 
 We note that by \eqref{eq:pullback} for $n=2$ 
 (and $p_{\mu_f}\equiv I_{\mu_f}$ on $\bC\setminus D_\infty$), 
 we also have 
\begin{gather}
 \bigl|F_0^{(2)}(1,\cdot)\bigr|\equiv e^{(d^2-1)(I_{\mu_f}+G^F(0,1))}
\quad\text{on }\bC\setminus(D_\infty\cup f^{-2}(D_\infty)).\tag{\ref{eq:lemniscate}$'$}\label{eq:lemniscate_2}
\end{gather} 
We claim that either $F(f)=D_\infty$, or $f^2$ is a polynomial; 
 indeed,  (a) if $f^{-2}(D_\infty)\neq D_\infty$, then 
 there is a Fatou component $U$ of $f$ 
 in $f^{-2}(D_\infty)\setminus D_\infty$, and then
 $(\emptyset\neq)f^{-2}(U)\subset F(f)\setminus(D_\infty \cup f^{-2}(D_\infty))$.
 Hence by \eqref{eq:lemniscate_2}, $F_0^{(2)}(1,\cdot)$ is constant on $\bC$, 
 that is, $f^2$ is a polynomial. 
 (b) If $f^{-2}(D_\infty)=D_\infty$, then 
 by \eqref{eq:lemniscate_2}, we have 
 $|F_0^{(2)}(1,\cdot)|\equiv e^{(d^2-1)(I_{\mu_f}+G^F(0,1))}$ on $\bC\setminus D_\infty$, so that either $F(f)=D_\infty$, or
 $F_0^{(2)}(1,\cdot)$ is constant on $\bC$. The latter possibility is 
 equivalent to $f^2$ being a polynomial.
\end{proof}

Choose $c\in\bC^*$ satisfying $|c|=e^{-(d-1)(I_{\mu_f}+G^F(0,1))}$, so that
\begin{gather}
 G^{cF}(0,1)=-I_{\mu_f}.\label{eq:normalized}
\end{gather}
Then for every $n\in\bN$, by \eqref{eq:pullback} (applied to
the lift $cF$ of $f$) and $p_{\mu_f}\equiv I_{\mu_f}$ on $\bC\setminus D_\infty$, 
the lemniscate 
\begin{gather*}
  L_{(cF)^n}:=\bigl\{z\in\bC:\bigl|(cF)_0^{(n)}(1,z)\bigr|=1\bigr\}
\end{gather*} 
contains 
$J(f)(\subset\bC\setminus f^{-n}(D_\infty)\subset\bC\setminus f^{-n}(\infty)$).

\begin{claim}\label{th:coincidence}
For every $n\in\bN$, $L_{(cF)^n}=L_{cF}$. Moreover, $f(L_{cF})\subset L_{cF}$. 
\end{claim}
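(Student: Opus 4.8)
The plan is to derive both assertions from Orevkov's irreducibility theorem (Theorem~\ref{th:invcircle}) together with B\'ezout's theorem, after recording a multiplicative identity for the iterated lift. Throughout we are in the case $D_\infty=F(f)$ left open by Claim~\ref{th:totinv} (the case ``$f^2$ is a polynomial'' already yielding (\ref{item:poly})), so that $D_\infty$ is totally invariant, i.e.\ $f^{-1}(D_\infty)=D_\infty$. Write $P_n:=(cF)_0^{(n)}(1,\cdot)\in\bC[z]$, so that $L_{(cF)^n}=\{z\in\bC:|P_n(z)|=1\}$. First I would check that every $P_n$ is non-constant, equivalently that $f^n$ is not a polynomial for any $n\in\bN$, which is exactly what makes Theorem~\ref{th:invcircle} applicable. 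Indeed, if $f^n$ were a polynomial, then $\infty$ would be a totally ramified superattracting fixed point of $f^n$ whose immediate basin is the single Fatou component $F(f^n)=F(f)=D_\infty$; hence the iterates of $f^n$ tend to $\infty$ throughout $D_\infty$. By the total invariance of $D_\infty$ the whole $f$-orbit of $\infty$ lies in $D_\infty$ and consists of fixed points of $f^n$, so these points must all coincide with $\infty$. Thus $\infty$ is fixed and totally ramified under $f$ itself, giving $f^{-1}(\infty)=\{\infty\}$ and making $f$ a polynomial, contrary to our standing assumption.

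Granting non-constancy, Theorem~\ref{th:invcircle} presents each $L_{(cF)^n}$, and in particular $L_{cF}$, as the zero set $\{\phi_n=0\}$ of some $\phi_n\in\bR[x,y]$ that is irreducible in $\bC[x,y]$. As recorded just before the claim, the infinite compact set $J(f)$ is contained in every $L_{(cF)^n}$ and in $L_{cF}$, so the irreducible polynomials $\phi_n$ and $\phi_1$ share infinitely many common zeros. Were they not associates, they would be coprime in $\bC[x,y]$, and then B\'ezout's theorem would confine their common zero locus in $\bC^2$ to a finite set---impossible, since $J(f)\subset\bR^2\subset\bC^2$ is infinite. Hence $\phi_n$ and $\phi_1$ are associates, and therefore $L_{(cF)^n}=\{\phi_n=0\}=\{\phi_1=0\}=L_{cF}$, which is the first assertion.

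For the invariance $f(L_{cF})\subset L_{cF}$, I would extract from $F^2=F\circ F$ and the degree-$d$ homogeneity of $F_0$ the factorization
\begin{gather*}
 (cF)_0^{(2)}(1,z)=(cF)_0(1,z)^d\cdot(cF)_0(1,f(z))\quad\text{on }\bC\setminus f^{-1}(\infty),
\end{gather*}
so that $|P_2(z)|=|P_1(z)|^d\cdot|P_1(f(z))|$ there. Now take $z\in L_{cF}$: then $|P_1(z)|=1$, so in particular $P_1(z)\neq0$, whence $z\notin f^{-1}(\infty)$ and $f(z)\in\bC$ is well defined, while the first assertion (for $n=2$) gives $z\in L_{(cF)^2}$, i.e.\ $|P_2(z)|=1$. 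Substituting into the factorization forces $|P_1(f(z))|=1$, that is $f(z)\in L_{cF}$, as desired.

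The main obstacle is the first assertion: knowing that the infinite set $J(f)$ lies on both lemniscates does not by itself force them to coincide, since a priori they could merely share a common algebraic component. It is precisely the irreducibility in Theorem~\ref{th:invcircle} that excludes this and lets B\'ezout upgrade ``infinitely many common points'' to ``equal curves''; the accompanying verification that no $f^n$ is a polynomial is the secondary technical point, whereas the homogeneity computation underlying the factorization is routine.
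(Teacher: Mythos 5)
Your proof is correct and follows essentially the same route as the paper: Orevkov's irreducibility theorem plus B\'ezout (coprime irreducibles have finitely many common zeros, while $J(f)$ is infinite) gives $L_{(cF)^n}=L_{cF}$, and the homogeneity factorization $(cF)_0^{(2)}(1,z)=(cF)_0(1,z)^d\cdot(cF)_0(1,f(z))$ combined with the case $n=2$ of the first assertion gives $f(L_{cF})\subset L_{cF}$. The one genuine difference is your preliminary step: you verify that each $(cF)_0^{(n)}(1,\cdot)$ is non-constant --- the hypothesis Orevkov's theorem actually requires --- by restricting to the case $D_\infty=F(f)$ and showing that no iterate $f^n$ can then be a polynomial. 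The paper leaves this point implicit, and it is not vacuous: under only the standing assumptions ($\mu_f=\nu_\infty$ and $f$ not a polynomial), the claim as literally stated fails for $f(z)=a(z-b)^{-d}+b$, since there $f^2$ is a polynomial, $(cF)_0^{(2)}(1,\cdot)$ is a unimodular constant, and $L_{(cF)^2}=\bC\neq L_{cF}$; what saves the paper's overall argument is exactly that this case has $D_\infty\neq F(f)$ and is already settled by Claim \ref{th:totinv}. So your restriction is not merely permissible but necessary, and your proposal patches a small gap rather than introducing one; the only caveat is that you should flag explicitly that you are proving the claim in the (only) case in which it is subsequently used, rather than as stated.
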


\begin{proof}
 For every $n\in\bN$, the non-polar (so infinite) set $J(f)$
 is contained in both $L_{cF}$ and $L_{(cF)^n}$. Hence 
(identifying $z\in\bC$ with $(\Re z,\Im z)\in\bR^2$,) we have
\begin{gather*}
  L_{cF}=\{(x,y)\in\bR^2:\phi(x,y)=0\}=L_{(cF)^n}
\end{gather*} 
for some $\phi(x,y)\in\bR[x,y]$ irreducible as an element of $\bC[x,y]$,
by Orevkov's theorem (Theorem \ref{th:invcircle}) and
the B\'ezout theorem (see, e.g., the book \cite[\S I.7]{Hartshorne77};
{for our purpose, a more elementary
\cite[Page 4, Lemma]{Shafarevich3rd}
is enough}). 
Moreover, for every $z\in\bC$, 
we have
$(cF)_0^{(2)}(1,z)=(cF_0)((cF)_0(1,z),(cF)_1(1,z))
 =(cF)_0(1,f(z))\cdot((cF)_0(1,z))^d$. 
Hence for every $z\in L_{cF}(=L_{(cF)^2}$), 
 we have $1=|(cF)_0(1,f(z))|\cdot 1^d$, that is,
 $f(L_{cF})\subset L_{cF}$.
\end{proof}

Suppose finally that $D_\infty=F(f)$. Then $L_{cF}\cap F(f)\neq\emptyset$;
for, otherwise, we must have $L_{cF}=J(f)$, so that $F(f)$ has also a
bounded component (intersecting with $f^{-1}(\infty)$). This contradicts $D_\infty=F(f)$.

Pick $z_0\in L_{cF}\cap F(f)$ (then $z_0\in L_{(cF)^n}$ for every $n\in\bN$
by the equality $L_{cF}=L_{(cF)^n}$ in Claim \ref{th:coincidence}, 
and $p_{\mu_f}(z_0)>I_{\mu_f}$).
Then by \eqref{eq:pullback} (applied to the lift $cF$ of $f$)
and \eqref{eq:normalized}, we must have
$p_{\mu_f}(f^n(z_0))-I_{\mu_f}
=d^n(p_{\mu_f}(z_0)-I_{\mu_f})\to\infty$ as $n\to\infty$.

On the other hand, by the inclusion $f(L_{cF})\subset L_{cF}$
in Claim \ref{th:coincidence}, 
the boundedness of $L_{cF}$ in $\bC$,
and the (upper semi)continuity of $p_{\mu_f}$ on $\bC$, 
we have 
$\limsup_{n\to\infty}(p_{\mu_f}(f^n(z_0))-I_{\mu_f})
\le\sup_{L_{cF}}(p_{\mu_f}-I_{\mu_f})<\infty$.
This is a contradiction. \qed

\begin{ac}
 The first author was partially supported by JSPS Grant-in-Aid 
 for Scientific Research (C), 15K04924.
\end{ac}

\def\cprime{$'$}


\begin{thebibliography}{10}

\bibitem{Brolin}
{\sc Brolin,~H.} Invariant sets under iteration of rational functions, {\em
  Ark. Mat.}, {\bf 6} (1965), 103--144.

\bibitem{Hartshorne77}
{\sc Hartshorne,~R.} {\em Algebraic geometry}, Springer-Verlag, New
  York-Heidelberg (1977), Graduate Texts in Mathematics, No. 52.

\bibitem{Lalley92}
{\sc Lalley,~S.~P.} Brownian motion and the equilibrium measure on the {J}ulia
  set of a rational mapping, {\em Ann. Probab.}, {\bf 20}, 4 (1992),
  1932--1967.

\bibitem{Lopes86}
{\sc Lopes,~A.~O.} Equilibrium measures for rational maps, {\em Ergodic Theory
  Dynam. Systems}, {\bf 6}, 3 (1986), 393--399.

\bibitem{ManeDaRocha92}
{\sc {Ma\~n\'e},~R.{\rm\ and }{da Rocha},~L.~F.} Julia sets are uniformly
  perfect, {\em Proc. Amer. Math. Soc.}, {\bf 116}, 3 (1992), 251--257.

\bibitem{Milnor3rd}
{\sc Milnor,~J.} {\em Dynamics in one complex variable}, Vol. 160 of {\em
  Annals of Mathematics Studies}, Princeton University Press, Princeton, NJ,
  third edition (2006).

\bibitem{ObaPitcher}
{\sc Oba,~M.~K.{\rm\ and }Pitcher,~T.~S.} A new characterization of the {$F$}
  set of a rational function, {\em Trans. Amer. Math. Soc.}, {\bf 166} (1972),
  297--308.

\bibitem{OS11}
{\sc Okuyama,~Y.{\rm\ and }Stawiska,~M.} Potential theory and a
  characterization of polynomials in complex dynamics, {\em Conform. Geom.
  Dyn.}, {\bf 15} (2011), 152--159.

\bibitem{orevkov18}
{\sc Orevkov,~S.~Y.} Irreducibility of lemniscates, {\em Uspekhi Mat. Nauk},
  {\bf 73}, 3(441) (2018), 177--178. English translation: ArXiv e-prints
(June 2018).

\bibitem{Ransford95}
{\sc Ransford,~T.} {\em Potential theory in the complex plane}, Cambridge
  University Press, Cambridge (1995).

\bibitem{Shafarevich3rd}
{\sc Shafarevich,~I.~R.} {\em Basic algebraic geometry. 1}, Springer,
  Heidelberg, third edition (2013), Varieties in projective space.

\bibitem{Sullivan85}
{\sc Sullivan,~D.} Quasiconformal homeomorphisms and dynamics. {I}. {S}olution
  of the {F}atou-{J}ulia problem on wandering domains, {\em Ann. of Math. (2)},
  {\bf 122}, 3 (1985), 401--418.

\end{thebibliography}
\end{document}